\documentclass[a4paper,DIV=classic]{myart}
\linespread{1.05}
\KOMAoptions{DIV=last}



\newcommand{\abs}[1]{\left\vert#1\right\vert}
\newcommand{\Set}[1]{\ensuremath{ \left\{ #1 \right\} }}
\newcommand{\set}[1]{\ensuremath{ \{ #1 \} }}
\newcommand{\R}{\mathbb{R}}
\newcommand{\N}{\mathbb{N}}
\renewcommand{\mid}{\,|\,}

\newcommand*{\cadlag}{c\`adl\`ag}
\newcommand*{\caglad}{c\`agl\`ad}
\newcommand*{\ladlag}{l\`adl\`ag}

\begin{document}

\title{Minimal supersolutions of BSDEs under volatility uncertainty}

\author[a,1,s]{Samuel Drapeau}
\author[a,2,t]{Gregor Heyne}
\author[b,3,s]{Michael Kupper}

\address[a]{Humboldt-Universit\"at zu Berlin, Unter den Linden 6, D-10099 Berlin}
\address[b]{Universit\"at Konstanz, Universit\"atstra\ss e 10, 78464 Konstanz}
\eMail[1]{drapeau@math.hu-berlin.de}
\eMail[2]{heyne@math.hu-berlin.de}
\eMail[3]{kupper@uni-konstanz.de}


\myThanks[s]{Funding: MATHEON project E.11}
\myThanks[t]{Funding: MATHEON project E.2}
\date{\today}

\abstract{
	We study the existence of minimal supersolutions of BSDEs under a family of mutually singular probability measures.
	We consider generators that are jointly lower semicontinuous, positive, and either convex in the control variable and monotone in the value variable, or that fulfill a specific normalization property.
}
\keyWords{Minimal Supersolutions of Second Order Backward Stochastic Differential Equations; Model Uncertainty; $G$-Expectation}

\maketitle

\section{Introduction}

We study the existence of minimal supersolutions of BSDEs under a general family of mutually singular probability measures.
To that end we consider a probability space $(\Omega,\mathcal{F},P)$ carrying a Brownian motion $W$.
By $(\mathcal{F}_t)$ we denote the Brownian filtration.
Given a family $\Theta$ of volatility processes $\theta$, we consider the process $\tilde{W}:\tilde{\Omega}\times \left[ 0,T \right]\to \mathbb{S}^{>0}_d$ defined as the stochastic integral
\begin{equation*}
	\tilde{W}\left( \theta \right)=\int\theta^{1/2} dW,\quad \theta \in \Theta,
\end{equation*}
where $\tilde{\Omega}:=\Omega \times \Theta$.
It generates a raw filtration $\tilde{\mathcal{F}}_t:=\sigma(\tilde{W}_s ; s\leq t)$, $t \in [0,T]$.
The family of measures is now given by $P^\theta[A]:=P[A(\theta)]$, $\theta\in\Theta$, for $A\in \tilde{\mathcal{F}}_T$ and in general it is not possible to define a probability measure under which all probability measures $P^\theta$ are absolutely continuous.

Following the approach developed in \citet{DHK1101} and \citet{HKM1011} we aim at constructing the candidate value process for the minimal supersolution of a BSDE by taking the essential infimum at each point in time and obtaining the corresponding control process by some compactness arguments.
Since the definition of an essential infimum over a set of random variables depends strongly on the underlying probability measure we first provide conditions under which it is possible to define a related notion.
More precisely, this is done by only minimizing over random variables with a specific regularity structure.
Moreover, by assuming that the set of probability measures is relatively compact we also obtain the existence of a sequence approximating the infimum in the capacity sense.

With this at hand, the next step is to adjust the framework of \citep{DHK1101} and \citep{HKM1011} in order to incorporate measurability with respect to the filtration $(\tilde{\mathcal{F}}_t)$ generated by $\tilde W$.
Quite often, the analysis in \citep{DHK1101,HKM1011} is based on arguments involving supermartingales and their respective right hand limit processes.
However, since in general $(\tilde{\mathcal{F}}_t)$ is neither right- nor left-continuous, we cannot resort to these standard procedures while staying adapted.
Therefore, we adopt the notion of optional strong supermartingales, which, by a result of \citet{Dellacherie1982}, are \ladlag~processes and relieve us of having to take right hand limits.
Accordingly, we formulate our BSDE in a stronger sense, that is with respect to stopping times.
More precisely, a  \ladlag~process $Y$ and a control process $Z$ constitute a supersolution of a backward stochastic differential equation if 
\begin{equation}\label{intro:eq:supersolution}
    Y_\sigma-\int_{\sigma}^{\tau}g_{s}\left(Y_s,Z_s \right)ds+\int_{\sigma}^{\tau}Z_s d\tilde{W}_s\geq Y_{\tau} \quad \text{ and }\quad Y_T\geq \xi,
\end{equation}
for all stopping times $0\leq \sigma\leq \tau\leq T$.
Here, $Y$ and $Z$ are adapted and predictable with respect to $(\tilde{\mathcal{F}}_t)$, respectively, and the equation is to be understood in a $\theta$-wise sense, that is for example $\int Zd\tilde W$ represents the family of projections $(\int Z(\theta)dW(\theta))_{\theta\in\Theta}$.

Our main result proves that under the same conditions on the generator as in \citep{DHK1101}, \citep{HKM1011} or \citep{DKGT2014}, there exists a minimal supersolution to \eqref{intro:eq:supersolution} in the quasi-sure sense among the supersolutions with a regularity controlled in terms of modulus of continuity.
As aforementioned, an appropriate essential infimum is necessary to overcome the lack of a dominating probability measure.
Therefore, we first prove that the pointwise infimum of the regular supersolutions is a good candidate value process that can be approximated by a sequence of supersolutions.
Second, with the candidate value process at hand, we obtain the candidate control process by arguing for each $\theta$ separately and then aggregating similar to \citet{STZ3} and \citet{nutz10} by using a result by \citet{karandikar01}.

The super replication problem under model uncertainty introduced by \citet{lyons95} is relatively recent and has been subject to many studies, see for example, \citet{avellaneda95,denis06,BionKer10,bion10,epstein2011}.
Except for the latter, they all take into account a superhedging problem under volatility uncertainty, whereas the latter also takes into account drift uncertainty.
It happens that the mathematical techniques underlying the problem of superhedging under volatility uncertainty are related to the theory of capacities introduced by \citet{choquet1954} and to quasi-sure stochastic analysis, see \citep{STZ10}, \citet{denis06,peng2011}, and the numerous references therein.
The superhedging problem under volatility uncertainty is also closely linked to other mathematical topics.
On the one hand, to the so called $G$-expectations introduced by \citet{peng07,peng08}, see also \citep{peng2011} and \citet{STZ2} for further studies and references.
On the other hand, to fully non-linear parabolic Partial Differential Equations as introduced by \citet{CSTV2007} and second order Backward Stochastic Differential Equations -- 2BSDE for short -- see \citep{STZ3} for the well posedness, \citet{STZ1} for a dual formulation, and \citet{ST2009} for the corresponding dynamic programing principle.
In contrast to these works, the technique presented here allows to consider generators without growth conditions, and also, no particular stability conditions on the set of volatility models.

The paper is organized as follows. In Section \ref{sec:settingM} we fix our notations and the setting, and introduce our notion of essential infimum. We define minimal supersolutions and introduce our main conditions in Section \ref{sec:tarpo:intro:rob}, which also contains our main result.

\section{Setting and Notation}\label{sec:settingM}
The set of dyadic numbers between $0$ and a finite time horizon $T>0$ is denoted by $\Pi:=\set{kT/n : n\in \N,k=0,\ldots,n}$.
Let $(\Omega,\mathcal{F},P)$ be a probability space carrying a $d$-dimensional Brownian motion $W$.
By $\left( \mathcal{F}_t\right)$ we denote the augmented filtration generated by $W$, which satisfies the usual conditions.
Let $L^0(\mathcal{F}_t)$ denote the set of $\mathcal{F}_t$-measurable random variables, where two of them are identified if they coincide $P$-almost surely.
For $p>0$, the space $L^p({\cal F}_t)$ denotes those random variables in $L^0({\cal F}_t)$ with finite $p$-norm.
We denote by $\mathcal{T}$ the set of $(\mathcal{F}_t)$-stopping times on $\Omega$ with values in $[0,T]$.
An $(\mathcal{F}_t)$-optional process $Y:\Omega\times [0,T]\to \R$ is a \emph{strong supermartingale} if $Y_{\sigma}\in L^1(\mathcal{F}_\sigma)$ and $E\left[ Y_{\tau}\mid \mathcal{F}_{\sigma} \right]\leq Y_{\sigma}$, for all $\sigma,\tau \in \mathcal{T}$ with $\sigma\leq \tau$.

Let $\Theta$ be a family of volatility processes
\begin{equation*}
	\theta:\Omega\times [0,T]\longrightarrow \mathbb{S}^{>0}_d,
\end{equation*}
which are progressively measurable and such that $\int_0^T \|\theta_u^{1/2}\|^2 du<\infty$, $P$-almost surely.
Here, $\mathbb{S}^{>0}_d$ is the set of strictly positive definite $d\times d$-matrices.
On the product space $\tilde{\Omega}:=\Omega \times \Theta$, we consider the process $\tilde{W}:\tilde{\Omega}\times \left[ 0,T \right]\to \R^d$ defined as the stochastic integral
\[\tilde{W}\left( \theta \right)=\int\theta^{1/2} dW,\quad \theta \in \Theta,\]
generating the filtration $(\tilde{\mathcal{F}}_t)$, where $\tilde{\mathcal{F}}_t:=\sigma(\tilde{W}_s ; s\leq t)$.
Since $(\tilde{\cal F}_t)$ is in general not right-continuous, we also consider $(\tilde{\cal F}_t^+)$ defined by
$\tilde {\cal F}_t^+:=\bigcap_{s>t} \tilde{\cal F}_s$, for $t\in [0,T)$, and $\tilde {\cal F}^+_T:=\tilde {\cal F}_T$.

On the sigma-algebra $\tilde{\mathcal F}_T$, it is in general not possible to define a probability measure under which all probability measures $P^\theta[A]:=P[A(\theta)]$, $\theta\in\Theta$, are absolutely continuous where $A(\theta)=\set{\omega \in \Omega: (\omega, \theta)\in A}$.
We therefore define the set function $\tilde{P}:\tilde{\mathcal{F}}_T\to [0,1]$ by 
\begin{equation}
	\tilde{P}\left[A\right]:=\sup_{\theta \in \Theta}P^\theta\left[A\right],\quad A \in \tilde{\mathcal{F}}_T.
	\label{}
\end{equation}
By $C_0([0,T];{\mathbb R}^d)$ we denote the space of continuous functions $w:[0,T]\rightarrow {\mathbb R}^d$, $w(0)=0$, equipped with the uniform norm $||w||_\infty:=\sup_{0\leq t\leq T}|w(t)|$.
\begin{remark}\label{rem:measures}
	For each $\theta\in\Theta$, let $\mu^\theta[B]:=P[\tilde{W}(\theta)\in B]$, where $B\in \mathscr{B}(C_0([0,T];{\mathbb R}^d))$.
	By means of \citep[Theorem 1]{peng2011}, $c(B):=\sup_{\theta\in\Theta} \mu^\theta(B)$ defines a \emph{capacity} 
	on $\mathscr{B}\left(C_0([0,T];{\mathbb R}^d)\right)$. Since any $A\in\tilde{\cal F}_T$ is of the form 
	$A=\tilde W^{-1}(B)$ for some $B \in\mathscr{B}(C_0([0,T];{\mathbb R}^d))$, it follows that 
	$\tilde{P}[A]=c(B)$ is a capacity on $\tilde{\cal F}_T$.
	In applications, the measures $\mu^\theta$, $\theta\in\Theta$, are often mutually singular.
\end{remark}
We work under the following assumption on the measures $\mu^\theta$ defined in Remark \ref{rem:measures}.
\begin{enumerate}[label=\textsc{(RCP)},leftmargin=40pt]
	\item \label{Cond:Comp} the set $\{\mu^\theta:\theta\in\Theta\}$ is relatively weak${}^\ast$-compact\footnote{That is, the $\sigma(\mathcal{M}_1,C_b)$-topology on the probability measures over the Polish space $C_0([0,T];\mathbb{R}^d)$.}.
\end{enumerate}
By means of Prohorov's theorem, the relatively weak${}^\ast$-compactness of $\set{\mu^\theta : \theta \in \Theta}$ is equivalent to the fact that $\set{\mu^\theta:\theta\in\Theta}$ is tight, see also \citep[Theorem 6]{peng2011}.
For instance, \ref{Cond:Comp} is satisfied, if $a \leq \theta \leq b$ for every $\theta \in \Theta$ for constants $0<a\leq b$.

In the following we summarize some notations of capacity theory, see also \cite{peng2011}.
A subset $A$ of $\tilde{\Omega}$ is called a \emph{polar set} if there exists $B \in \tilde{\mathcal{F}}_T$ with $A \subseteq B$ such that $\tilde{P}[B]=0$.
The set of all polar sets is denoted by ${\mathcal N}$.  
We say that a property holds \emph{quasi-surely} if this property holds outside a polar set, that is, this property holds $P^\theta$-almost surely for all $\theta \in \Theta$.
By $L^0(\tilde{\mathcal{F}}_t)$ we denote the set of $\tilde{\mathcal{F}}_t$-measurable random variables $X:\tilde{\Omega}\to \R$, where two of them are identified if they coincide quasi-surely.
Equalities and inequalities between $\tilde{\mathcal{F}}_t$-measurable random variables are understood in the quasi-sure sense.
For any $X \in L^0(\tilde{\mathcal{F}}_T)$ such that $E[X(\theta)]$ exists for all $\theta\in\Theta$, we define the \emph{upper expectation} of $X$ as
\begin{equation}
	\tilde{E}\left[ X \right]:=\sup_{\theta\in\Theta}E\left[ X(\theta) \right].
	\label{}
\end{equation}
The set $L^1(\tilde{\mathcal{F}}_T)$ consists of those $X \in L^0(\tilde{\mathcal{F}}_T)$, for which $\tilde{E}[\abs{X}]< +\infty$.

For any $X\in L^0(\tilde{\mathcal{F}}_t)$, there exists a measurable function $\varphi:C_0([0,T],\R^d)\to \R$ such that $X=\varphi(\tilde{W}^t)$, where $\tilde W^t$ is the stopped process $\tilde W_s^t:=\tilde W_{s\wedge t}$.
In general it is not possible to define an ``essential infimum'' for subsets in $L^0(\tilde{\cal F}_t)$ with respect to the capacity $\tilde{P}$.
However, under the assumption that the infimum is taken over a subset of regular random variables, then it is an essential infimum in the sense of the subsequent proposition.
To this end, we fix an arbitrary countable set $\mathfrak{M}$ of moduli of continuity $\mathfrak{m}$, that is, $\mathfrak{m}: [0,\infty]\to [0,\infty]$ where $\mathfrak{m}(0)=0$ and $\mathfrak{m}$ is continuous at $0$.\footnote{
For instance, $\mathfrak{M}=\set{\mathfrak{m}(x)=qx: q \in \mathbb{Q}_+}$ or  $\mathfrak{M}^\prime=\set{\mathfrak{m}(x)=qx^r : q,r \in \mathbb{Q}_+, 0<r\leq 1}$ correspond to the moduli of continuity of all Lipschitz or H\"older continuous functions, respectively.}
For $\mathfrak{m}\in \mathfrak{M}$, we further define $C_{\mathfrak{m}}(\tilde{\mathcal{F}}_t)$ as the set of those  $X =\varphi(\tilde W^t) \in L^0(\tilde{\mathcal F}_t)$ where $\varphi$ has a modulus of continuity $\mathfrak{m}$, that is $|\varphi(w)-\varphi(w^\prime)|\leq \mathfrak{m}(||w-w^\prime||_{\infty})$, for all $w,w^\prime \in C([0,T]; \mathbb{R}^d)$.
Recall that the infimum of an arbitrary family of functions with moduli of continuity $\mathfrak{m}$ has itself also a modulus of continuity $\mathfrak{m}$ provided it is finite valued at every point.
\begin{proposition}\label{prop:ex:inf}
	Let $\mathcal{X}\subseteq C_{\mathfrak{m}}(\tilde{\mathcal{F}}_t)$, $\mathfrak{m} \in \mathfrak{M}$, bounded from below and define 
    \begin{equation*}
       X^\ast:=\varphi^\ast(\tilde W^t)\in  C_{\mathfrak{m}}(\tilde{\mathcal{F}}_t) 
    \end{equation*}
    where $\varphi^\ast$ is the pointwise infimum over all functions $\varphi:C_0([0,T];{\mathbb R}^d)\to{\mathbb R}$ satisfying $\varphi(\tilde W^t)\in \mathcal{X}$.
    Then, for any $\theta \in \Theta$, there exists a sequence $(X^n)$ in $\mathcal{X}$ such that
	\begin{equation}
		X^\ast(\theta)=\left(\inf_{n\in{\mathbb N}} X^n\right)(\theta).
		\label{eq:blo01}
	\end{equation}
	If in addition \ref{Cond:Comp} is fulfilled, then there exists a sequence $(X^n)$ in ${\cal X}$ such that for every $\varepsilon>0$ it holds
	\begin{equation}\label{eq:blo02}
		\lim_{n\to\infty} \tilde P\left[ (X^1\wedge\cdots\wedge X^n)-X^\ast > \varepsilon\right] = 0.
	\end{equation}
\end{proposition}
\begin{proof}
	\begin{enumerate}[label=\textit{Step \arabic*:},fullwidth]
		\item Fix $\varepsilon>0$ and $\theta\in \Theta$. 
			There exists a compact set $K\in\mathscr{B}(C_0([0,T];{\mathbb R}^d))$ such that $\mu^\theta(K^c)\le\varepsilon$.
			For any $x\in K$ let $\varphi_x^\varepsilon: C_0([0,T];{\mathbb R})\to {\mathbb R}$ be a function with modulus of continuity $\mathfrak{m}$ such that $\varphi^\varepsilon_x(\tilde W^t)\in{\cal X}$ and $|\varphi^\ast(x)-\varphi^\varepsilon_x(x)|\le\varepsilon$, and define the open sets
			\begin{equation*}
				O^\varepsilon_x:=\Set{y\in C_0([0,T];{\mathbb R}^d) : \abs{\varphi^\varepsilon_x(y)-\varphi^\varepsilon_x(x)}<\varepsilon\mbox{ and }\abs{\varphi^\ast(x)-\varphi^\ast(y)}<\varepsilon}.
			\end{equation*}
			The family $(O^\varepsilon_x)_{x\in K}$ is an open cover of $K$, so that by compactness, there exist $x_1,\dots, x_N$ such that	$K\subseteq O^\varepsilon_{x_1}\cup\dots\cup O^\varepsilon_{x_N}$.
			By construction holds $\varphi^\varepsilon_{x_1}\wedge\dots\wedge \varphi^\varepsilon_{x_N}\le \varphi^\ast+3\varepsilon$ on the set $K$.
			Hence
			\[ P\left[\varphi^\varepsilon_{x_1}\left({\tilde W}^t(\theta)\right)\wedge\dots\wedge \varphi^\varepsilon_{x_N}\left({\tilde W}^t(\theta)\right)>\varphi^\ast\left({\tilde W}^t(\theta)\right)+3\varepsilon\right]\le\varepsilon. \]
			This shows that $X^\ast(\theta)=\mathop{\rm ess\,inf}\left\{X\in L^0({\cal F}_t): X\in {\cal X}(\theta)\right\}$ and by \citet[Theorem A.32]{foellmer01} there exists a sequence $(X^n)$ in ${\cal X}$ such that $X^\ast(\theta)=(\inf_n X^n)(\theta)$.

		\item Fix $\varepsilon>0$.
			Since $\set{\mu^\theta:\theta\in\Theta}$ is tight, it follows that there exists a compact set $K\in\mathscr{B}(C_0([0,T];{\mathbb R}^d))$ such that $c(K^c)\le \varepsilon$.
			Let $\varphi^\varepsilon_{x_1},\dots, \varphi^\varepsilon_{x_N}$ be the functions as defined in the previous step, so that 
            \[\tilde P\left[\left(\varphi^\varepsilon_{x_1}({\tilde W}^t )\wedge\dots\wedge \varphi^\varepsilon_{x_N}({\tilde W}^t ) -X^\ast\right) >3\varepsilon\right]\le\varepsilon\]
			Finally, defining  $(X^n)$ as a sequence running through 
			$\bigcup_{n\in\mathbb{N}}\{\varphi^{1/n}_{x_1}({\tilde W}^t ),\dots, \varphi^{1/n}_{x_{N(n)}}({\tilde W}^t )\}$ is as desired.
	\end{enumerate}
\end{proof}


\section{Minimal Supersolutions under Volatility Uncertainty}\label{sec:tarpo:intro:rob}
Let $M, N:\tilde{\Omega}\times [0,T]\to \R$ be $(\tilde{\mathcal{F}}_t)$-adapted processes.
The process $M$ is called \cadlag, \caglad~or \ladlag~if the paths of $M$ are  \cadlag, \caglad~or \ladlag~quasi-surely, respectively.
Given a \ladlag~process, we denote by $M^-$ and $M^+$ its \caglad~and \cadlag~version, respectively, that is
\begin{gather*}
	M^-_t:=\lim_{s\nearrow t}M_s,\quad \text{for }t \in ]0,T],\quad\text{and}\quad M^-_0:=M_0,\\
	M^+_t:=\lim_{s\searrow t}M_s,\quad \text{for }t \in [0,T[,\quad\text{and}\quad M^+_T:=M_T,
\end{gather*}
outside the polar set where $M$ is not \ladlag.
Two $(\tilde{\mathcal{F}}_t)$-adapted processes $M,N:\tilde{\Omega}\times [0,T]\to \R$ are modifications of each others, if $M_t=N_t$, for all $t \in [0,T]$.
We say that $M$ is a \emph{supermartingale} or a \emph{strong supermartingale}, if $M(\theta)$ is a supermartingale or a strong supermartingale, for all $\theta \in \Theta$, respectively.
See \cite[Appendix I]{Dellacherie1982} for a definition of strong supermartingales.

Let us define the following sets of value and control processes:
\begin{itemize}
	\item $\mathcal{S}$ is the set of $(\mathcal{F}_t)$-adapted \ladlag~processes $Y:\Omega \times [0,T]\to \R$;
	\item $\tilde{\mathcal{S}}$ is the set of \ladlag~processes $Y:\tilde{\Omega}\times [0,T]\to \R$, and such that $Y(\theta)$ is optional, for all $\theta \in\Theta$;
	\item For every $\theta \in \Theta$, $\mathcal{L}(\theta)$ is the set of $(\mathcal{F}_t)$-predictable processes $Z:\Omega \times [0,T]\to \R^d$ such that $P\left[\int_{0}^{T}\|Z_u \theta_u^{1/2}\|^2 du<\infty\right]=1$;
	\item $\tilde{\mathcal{L}}$ is the set of $(\tilde{\mathcal{F}}_t)$-predictable processes $Z:\tilde{\Omega}\times [0,T]\to \R^d$ such that $Z(\theta)\in \mathcal{L}(\theta)$, for all $\theta \in \Theta$.
\end{itemize}
A \emph{generator} is a jointly measurable function g from $\tilde \Omega \times [0, T ]\times \mathbb{R}\times \mathbb{R}^{1\times d}$ to $\mathbb{R}\cup  \{+\infty\}$ such that the mapping $(s,\omega, \theta)\mapsto g_s(\omega,\theta,y,z):([0,t]\times \tilde \Omega,\mathcal{B}([0,t])\otimes \tilde{\mathcal{F}}_t)\rightarrow (\mathbb{R}^d,\mathcal{B}(\mathbb{R}^d))$ is measurable, for each $t$, for all $(y,z)\in\mathbb{R}^{d+1}$.
We say that a generator $g$ is
\begin{enumerate}[label=\textsc{(Pos)},leftmargin=40pt]
	\item \label{cond00} positive, if $g\left(\theta, y,z \right)\geq 0$;
\end{enumerate}
\begin{enumerate}[label=\textsc{(Lsc)},leftmargin=40pt]
	\item \label{condlsc} if $(y,z)\mapsto g(\theta,y,z)$ is lower semicontinuous;
\end{enumerate}
\begin{enumerate}[label=\textsc{(Mon)},leftmargin=40pt]
	\item \label{cond03} increasing, if $y\mapsto g\left(\theta,y,z  \right)$ is increasing; 
\end{enumerate}
\begin{enumerate}[label=\textsc{(Mon${}^\prime$)},leftmargin=40pt]
	\item \label{cond02} decreasing, if $y\mapsto g\left(\theta,y,z  \right)$, is decreasing; 
\end{enumerate}
\begin{enumerate}[label=\textsc{(Con)},leftmargin=40pt]
	\item \label{cond01} convex, if $z \mapsto g\left(\theta,y,z\right)$ is convex;
\end{enumerate}
\begin{enumerate}[label=\textsc{(Con${}^\prime$)},leftmargin=40pt]
	\item \label{cond01bis} jointly convex, if $(y,z)\mapsto g\left(\theta,y,z\right)$ is convex;
\end{enumerate}
\begin{enumerate}[label=\textsc{(Nor)},leftmargin=40pt]
	\item \label{cond04} normalized, if $g\left(\theta,y,0 \right)=0$;
\end{enumerate}
$P\otimes dt$-almost surely, for all $y\in\R$, all $z \in \R^{1 \times d}$ and all $\theta \in \Theta$.

A pair $(Y,Z)\in\tilde{{\cal S}}\times \tilde{\mathcal{L}}$ is said to be a \emph{supersolution} of the BSDE with generator $g$ and terminal condition $\xi\in L^0(\tilde{\cal F}_T)$, if
\begin{equation}\label{eq:central:ineq:rob}
	Y_\sigma(\theta)-\int_\sigma^\tau g_{u}(\theta,Y_u(\theta),Z_u(\theta))du+\int_\sigma^\tau Z_{u}(\theta) d\tilde{W}_{u}(\theta) \geq Y_\tau(\theta)\quad\text{and} \quad Y_T(\theta)\geq\xi(\theta),
\end{equation}
for all $\sigma,\tau \in \mathcal{T}$, with $\sigma\leq \tau$, and for all $\theta \in \Theta$.
For such a supersolution $(Y,Z)$, we call $Y$ the \emph{value process} and $Z$ its \emph{control process}.
However, in order to avoid so-called ``doubling strategies'', present even for the simplest generator $g\equiv 0$, see \citet{dudley02} or \citet[Section 6.1]{harrison01}, we only consider control processes, which are admissible, that is $\int Z(\theta)d\tilde W(\theta)$ is a supermartingale, for all $\theta\in\Theta$.
We denote the set of such supersolutions by
\begin{equation}\label{set:rob:A:B}
	\mathcal{A}(\xi)= \set{ (Y,Z) \in\tilde{\mathcal{S}}\times \tilde{\mathcal{L}} : Z \text{ is admissible and } \eqref{eq:central:ineq:rob} \text{ holds}}.
\end{equation}
Our goal is to prove the existence of minimal supersolutions.
In order to make use of the notion of "essential infimum" in the sense of Proposition \ref{prop:ex:inf}, we restrict to the subclass $\mathcal{A}^{\mathfrak{M}}(\xi)$ of those supersolutions $(Y,Z)\in \mathcal{A}(\xi)$ which are $\mathfrak{M}$-regular, that is, $Y$ has a modification $\hat Y$ satisfying $\hat Y_t\in C_{\mathfrak{m}}(\tilde {\cal F}_t)$ for all $t\in [0,T]$ for some $\mathfrak{m}\in\mathfrak{M}$.

The main result of this paper states that the infimum over all $\mathfrak{M}$-regular supersolutions
\begin{equation*}
    \mathcal{E}_t^{\mathfrak{M}}(\xi):=\inf\Set{Y_t: (Y,Z) \in \mathcal{A}^{\mathfrak{M}}(\xi)}
\end{equation*}
is a supersolution, that is, there exists $(Y,Z)\in \mathcal{A}(\xi)$ such that $\mathcal{E}^{\mathfrak{M}}(\xi)$ is a modification of $Y$.
Here, the infimum is understood as the pointwise infimum over the respective representants in $\bigcup_{\mathfrak{m}\in\mathfrak{M}}C_{\mathfrak{m}}(\tilde{\mathcal{F}}_t)$.
The result strongly relies on the following proposition which shows that $\mathcal{E}^{\mathfrak{M}}(\xi)$ can be approximated by a sequence of $\mathfrak{M}$-regular supersolutions.



\begin{proposition}\label{prop:approximation}
 Let $g$ be a generator fulfilling \ref{cond00} and $\xi\in L^0(\tilde{\mathcal{F}}_{T})$ be a terminal condition such that $\xi^- \in L^1(\tilde{\mathcal{F}}_T)$.
Suppose that \ref{Cond:Comp} holds and that there exists $\bar{\theta}\in \Theta$ such that $\mu^{\bar{\theta}}$ is strictly positive, and $\mathcal{A}^{\mathfrak{M}}(\xi)\neq \emptyset$.
Then, there exists a sequence $((Y^n,Z^n))\subseteq \mathcal{A}^\mathfrak{M}(\xi)$ such that $Y:=\inf Y^n\in\tilde {\cal S}$ 
is a modification of $\mathcal{E}^\mathfrak{M}(\xi)$.
\end{proposition}
\begin{proof}

Let  $\mathcal{A}^{\mathfrak{m}}:=\mathcal{A}^{\mathfrak{m}}(\xi)$ be the set of supersolutions $(Y,Z)\in \mathcal{A}(\xi)$ which
are $\mathfrak{m}$-regular, that is, $Y$ has a modification $\hat Y$ satisfying $\hat Y_t\in C_{\mathfrak{m}}(\tilde{\mathcal{F}}_t)$
for all $t\in[0,T]$.

    \begin{enumerate}[label=\textit{Step \arabic*:}, fullwidth,ref=Step \arabic*]
        \item Fix an $\mathfrak{m}\in\mathfrak{M}$ such that ${\cal A}^{\mathfrak{m}}\neq\emptyset$ and define $\mathcal{E}_t^{\mathfrak{m}}=\inf\Set{Y_t: (Y,Z) \in \mathcal{A}^{\mathfrak{m}}}$.
         In this first step, we provide a countable dense subset of paths in $C([0,T];\mathbb{R}^d)$ along which $\mathcal{E}^\mathfrak{m}$ jumps only countably many times.
         By Lemma \ref{lem:rob:Y:satisfies:tarpo}, for any supersolution $(Y,Z)\in{\cal A}^{\mathfrak{m}}$, it holds $Y_t(\theta)\ge -E[\xi^-(\theta)\mid{\cal F}_t]$, for all $\theta\in\Theta$ and $t\in [0,T]$. Hence, by Proposition \ref{prop:ex:inf} there exist 
$\varphi_t:C_0([0,T];{\mathbb R}^d)\to{\mathbb R}$ with modulus of continuity $\mathfrak{m}$ such that  
$\mathcal{E}^{\mathfrak{m}}_t=\varphi_t(\tilde W^t)$, for all $t\in[0,T]$. 
Define the mappings $\varphi^-,\varphi^+:C_0([0,T];\mathbb{R}^d)\rightarrow \mathbb{R}^{[0,T]}$ given by
            \begin{equation}\label{eq:def:phiminusplus}
                w \mapsto \left(\limsup_{\Pi \ni q\uparrow t} \varphi_q(w^q) \right)_{t\in[0,T]} \quad \text{ and } \quad 
                w \mapsto \left(\limsup_{\Pi \ni q\downarrow t} \varphi_q(w^q) \right)_{t\in[0,T]}
            \end{equation}
            where $w^q:=w_{\cdot\wedge q}$, respectively.
            Since $\varphi_t$ has a modulus of continuity $\mathfrak{m}$ for every $t$, it follows that $\varphi^-_t, \varphi^+_t$ also have a modulus of continuity $\mathfrak{m}$ for every $t$.
            For quasi all $w\in C_0([0,T];{\mathbb R}^d)$ the image $\varphi^-(w)$ is \caglad.
            Indeed, note first that, by Lemma \ref{lem:rob:Y:satisfies:tarpo06}, for all $\theta\in\Theta$, $P$-almost surely,
            \begin{equation*}
                \varphi^-\left(\tilde W\left(\theta\right)\right) =  \left(\limsup_{\Pi \ni q\uparrow t} \varphi_q\left(\tilde W^q\left(\theta\right)\right) \right)_{t\in[0,T]}=  \left(\limsup_{\Pi \ni q\uparrow t} \mathcal{ E}^{\mathfrak{m}}_q\left(\theta\right) \right)_{t\in[0,T]}=\mathcal{E}^{\mathfrak{m},-}\left(\theta\right).
            \end{equation*}
            Now, let $N:=\set{w \in C_0([0,T];\mathbb{R}^d): \varphi^-(w) \text{ is not \caglad}}$. 
            Then, again with Lemma \ref{lem:rob:Y:satisfies:tarpo06}, for all $\theta\in\Theta$,
            \begin{equation*}
                P\left[\tilde W\left(\theta\right)\in N\right]= P\left[\varphi^-\left(\tilde W\left(\theta\right)\right) \text{ is not \caglad}\right]=P\left[\mathcal{ E}^{\mathfrak{m},-}\left(\theta\right) \text{ is not \caglad}\right]=0,
            \end{equation*}
            and hence $c(N)=0$.
            By the same arguments we obtain that for quasi all $w\in C_0([0,T];{\mathbb R}^d)$ the image $\varphi^+(w)$ is \cadlag. 
            It follows that for quasi all $w\in C_0([0,T];{\mathbb R}^d)$ the set of jump points
            \begin{equation*}
                \mathcal{J}(w):=\Set{t\in[0,T]:\varphi_t^{-}\left(w\right)> \varphi_t^{+}\left(w\right)}
            \end{equation*}
            is countable.
            Indeed, for $N:=\set{w\in C_0([0,T];{\mathbb R}^d): \mathcal{J}(\omega) \text{ is uncountable}}$ we have
            \begin{equation}\label{eq:proof:rob:min:supsol:1}
                \begin{split}
                    P\left[\tilde W(\theta) \in N\right]&= P\left[\mathcal{J}\left(\tilde W\left(\theta\right)\right) \text{ is uncountable}\right]\\
                    &=P\left[\mathcal{E}^{\mathfrak{m},-}_t\left(\theta\right)> \mathcal{ E}^{\mathfrak{m},+}_t\left(\theta\right) \text{ for uncountably many } t\in [0,T]\right]=0,
                \end{split}
            \end{equation}
            for all $\theta\in\Theta$, which implies $c(N)=0$. 
            To see the last equality in \eqref{eq:proof:rob:min:supsol:1}, note first that, $P$-almost surely, $\mathcal{E}^{\mathfrak{m},-}_t(\theta)(\omega) > \mathcal{ E}^{\mathfrak{m},+}_t(\theta)(\omega)$ implies that $\mathcal{E}^{\mathfrak{m},+}_t(\theta)(\omega)$ jumps at $t$.
            Indeed, suppose that it does not, that is $\mathcal{E}^{\mathfrak{m},+}_t(\theta)(\omega)=\lim_{s\uparrow t}\mathcal{E}^{\mathfrak{m},+}_s(\theta)(\omega)$.
            Then we can find, for every $\varepsilon >0$, some $s\in[0,t)$ and a $p\in\mathbb{Q}$ with $s<p<t$, such that
            \begin{equation*}
                \abs{\mathcal{ E}^{\mathfrak{m},+}_t(\theta)(\omega)-\mathcal{ E}^\mathfrak{m}_p(\theta)(\omega)}\leq \abs{\mathcal{ E}^{\mathfrak{m},+}_t(\theta)(\omega)-\mathcal{ E}^{\mathfrak{m},+}_s(\theta)(\omega)}+\abs{\mathcal{ E}^{\mathfrak{m},+}_s(\theta)(\omega)-\mathcal{E}_p^{\mathfrak{m}}(\theta)(\omega)}\leq \varepsilon.
            \end{equation*}
            Hence, for $\varepsilon_n:=1/n$ and the corresponding $p_n$, with $p_n\leq p_{n+1}$, we obtain the contradiction $\mathcal{ E}^{\mathfrak{m},+}_t(\theta)(\omega)=\lim_n \mathcal{E}^\mathfrak{m}_{p_n}(\theta)(\omega)=\mathcal{ E}^{\mathfrak{m},-}_t(\theta)(\omega)$.
            This implies the result since the \cadlag\,process $\mathcal{ E}^{\mathfrak{m},+}(\theta)$ has only countably many jumps.

            Recall that $C_0([0,T];{\mathbb R}^d)$ is separable and that by assumption there exists $\bar\theta$ such that $\mu^{\bar\theta}$ is strictly positive, that is $\mu^{\bar\theta}(B)>0$, for each nonempty open set $B\in\mathcal{B}(C_0([0,T];\mathbb{R}^d))$.
            This allows us to choose a dense\footnote{That is, the $\|\cdot\|_\infty$-closure of $\{w_k:k\in{\mathbb N}\}$ 
                is $C_0([0,T];{\mathbb R}^d)$.} sequence $(w_k)$ in $C_0([0,T];{\mathbb R}^d)$, such that ${\cal J}(w_k)$ is countable for all $k\in{\mathbb N}$.
            Indeed, we start with an arbitrary dense subset $(\bar w_k)$ and consider the countable set of balls $(B_{1/m}(\bar w_k))_{m,k\in\mathbb{N}}$.
            Each $B_{1/m}(\bar w_k)$ has positive measure under $\mu^{\bar\theta}$ and hence contains some $w_{m,k}$ such that $\mathcal{J}(w_{m,k})$ is countable.
            By construction $(w_{m,k})_{m,k\in\mathbb{N}}$ is a dense subset, which for simplicity is denoted with $(w_{k})$.
            The countable union
            \[ {\cal J}:=\bigcup_{k\in{\mathbb N}} {\cal J}(w_k)\]
            is a countable subset of $[0,T]$.

        \item In this second step, still for a fixed $\mathfrak{m} \in \mathfrak{M}$ with ${\cal A}^{\mathfrak{m}}\neq\emptyset$, we construct an approximating sequence and a limit as in the statement of the proposition but for $\mathcal{E}^\mathfrak{m}$. By \ref{Cond:Comp} and Proposition \ref{prop:ex:inf}, for each $t \in \Pi\cup{\cal J}$ there exists  a sequence $(Y^{n,t},Z^{n,t})_{n\in\N}$ in $\mathcal{A}^\mathfrak{m}$ which satisfies
            \begin{equation*}
                \lim_{n\to\infty}\tilde P\left[( Y^{1,t}_t\wedge\dots\wedge Y^{n,t}_t) - \mathcal{E}^\mathfrak{m}_{t}\ge\varepsilon\right] = 0,\qquad\mbox{for every }\varepsilon>0. 
            \end{equation*}
            Now, let $((Y^{n},Z^{n}))$ be a sequence running through the countable family $((Y^{n,t},Z^{n,t}))_{n\in {\mathbb N},t \in \Pi\cup{\cal J}}$, such that
            \begin{equation}\label{main:eq1}
                \tilde P\left[\left(Y^1_t\wedge\dots\wedge Y^n_t\right)-\mathcal{E}^\mathfrak{m}_t\ge\varepsilon\right]\to 0,\quad\mbox{for all }t\in\Pi\cup{\cal J}\mbox{ and every }\varepsilon>0.	
            \end{equation}
            Defining $Y:=\inf_{n\in{\mathbb N}} Y^n$, it holds $Y_t=\mathcal{E}^\mathfrak{m}_t$, for all $t\in\Pi\cup{\cal J}$.

            We next fix an arbitrary $\theta\in\Theta$ and show that $Y(\theta)$ is a strong supermartingale.
            Indeed, since $Y^{n}(\theta)$ is a strong supermartingale, see Lemma \ref{lem:rob:Y:satisfies:tarpo}, for each $n\in{\mathbb N}$, it follows
            \begin{equation*}
                E\left[ Y_{\tau}(\theta)\mid \mathcal{F}_\sigma \right]\le \inf_{n\in{\mathbb N}} E\left[ Y^{n}_\tau(\theta)\mid \mathcal{F}_\sigma \right]\le \inf_{n\in{\mathbb N}}  Y^n_\sigma(\theta)   \leq Y_\sigma(\theta),
            \end{equation*}
            for all $\sigma,\tau \in \mathcal{T}$ with $\sigma\leq \tau$.
            The integrability condition of $Y$ follows from $Y^1_\tau(\theta)\geq Y_\tau(\theta)$ 
            and the fact that $Y^n_\tau(\theta)$ is uniformly bounded from below by $-E[\xi^-(\theta)\mid \mathcal{F}_\tau]\in L^1(\mathcal{F}_\tau)$,
            for all $\tau \in \mathcal{T}$.

            The process $Y\in\tilde{\cal S}$.
            Indeed, for each $\theta \in\Theta$ the process $Y^{n}(\theta)$ is $(\mathcal{F}_t)$-optional.
            Since $Y$ is the countable infimum over the processes $Y^n$, it follows that $Y(\theta)$ is $(\mathcal{F}_t)$-optional for all $\theta \in \Theta$.
            Thus, we deduce by means of \cite[Appendix 1, Theorem 4, p. 395]{Dellacherie1982} that $Y(\theta)$ is \ladlag , for all $\theta \in \Theta$. 
            This shows that quasi all paths of $Y$ are \ladlag. In particular, since $Y_t=\mathcal{E}^\mathfrak{m}_t$, for all $t\in \Pi$, it follows $Y^-=\mathcal{E}^{\mathfrak{m},-}$ and $Y^+=\mathcal{E}^{\mathfrak{m},+}$. 

            Let us show that $Y_t=\mathcal{E}^\mathfrak{m}_t$, for all $t\in[0,T]$.
            Two distinct cases may happen
            \begin{enumerate}
                \item either $\tilde P[\mathcal{E}^{\mathfrak{m},-}_t > \mathcal{E}^{\mathfrak{m},+}_t]>0$.
                    In this case, recall that $\varphi^-_t,\varphi^+_t$ are continuous, the set 
                    \begin{equation*}
                        \Set{w\in C_0([0,T];{\mathbb R}^d): \varphi_t^{-}(w)> \varphi_t^{+}(w)}.
                    \end{equation*}
                    is open and nonempty.
                    Hence, it contains some $w_{k_0}$ and consequently $t\in{\cal J}$, which implies $Y_t=\mathcal{E}^\mathfrak{m}_t$.
                \item or $\tilde P[\mathcal{E}^{\mathfrak{m},-}_t > \mathcal{E}^{\mathfrak{m},+}_t]=0$, that is $\mathcal{E}^{\mathfrak{m},-}_t = \mathcal{E}^{\mathfrak{m},+}_t$.
                    Since $Y$ is a supermartingale and $(\mathcal{F}_t)$ fulfills the usual conditions, it holds $Y_t^-\ge Y_t$, for all $t\in[0,T]$, see \citet[Proposition 1.3.14]{Karatzas1991}.
                    By Lemma \ref{lem:rob:Y:satisfies:tarpo06}, we get
                    \begin{equation*}
                        \mathcal{E}^{\mathfrak{m},-}_t=Y^-_t\ge Y_t\ge \mathcal{E}^\mathfrak{m}_t\ge \mathcal{E}^{\mathfrak{m},+}_t,
                    \end{equation*}
                    which in turns implies $Y_t=\mathcal{E}^\mathfrak{m}_t$.
            \end{enumerate}

        \item Finally, we construct the approximating sequence for $\mathcal{E}^\mathfrak{M}$. 
            W.l.o.g.~we assume that ${\cal A}^{\mathfrak{m}}\neq \emptyset$ for all $\mathfrak{m}\in\mathfrak{M}$.
            For every $\mathfrak{m}\in \mathfrak{M}$, denote by $((Y^{n,\mathfrak{m}},Z^{n,\mathfrak{m}}))$ the sequence constructed in the previous step so that $Y^\mathfrak{m}=\inf_{n\in \mathbb{N}} Y^{n,\mathfrak{m}}\in\tilde{\cal S}$ is a modification of $\mathcal{E}^\mathfrak{m}$.
            By the same argumentation as in the previous step,
            \begin{equation*}
                Y:=\inf_{\mathfrak{m}\in\mathfrak{M}}Y^{\mathfrak{m}}=  \inf_{n \in \mathbb{N}, \mathfrak{m}\in\mathfrak{M}}Y^{n,\mathfrak{m}} \in \tilde{\mathcal{S}}.
            \end{equation*}
            We are left to show that $Y$ is a modification of $\mathcal{E}^\mathfrak{M}$.
            To this end, for every $t \in [0,T]$,
            \begin{equation*}
                \mathcal{E}_t^\mathfrak{M}(\theta)\leq Y_t(\theta)=\inf_{\mathfrak{m} \in \mathfrak{M}}Y_t^\mathfrak{m}(\theta)=\inf_{\mathfrak{m}\in \mathfrak{M}}\mathcal{E}_t^\mathfrak{m}(\theta)=\mathcal{E}_t^\mathfrak{M}(\theta),\quad P\text{-almost surely for all }\theta \in \Theta,
            \end{equation*}
            showing that $Y$ is a modification of $\mathcal{E}^\mathfrak{M}$.
    \end{enumerate}
\end{proof}

Our main existence result for minimal supersolutions of BSDE under model uncertainty can now be stated as follows. 
\begin{theorem}\label{thm:existence:rob:ladlag}
    Suppose that \ref{Cond:Comp} holds and that there exists $\bar{\theta}\in \Theta$ such that $\mu^{\bar{\theta}}$ is strictly positive.
    Let $g$ be a generator fulfilling  \ref{cond00}, \ref{condlsc}, \ref{cond01} and either \ref{cond03} or \ref{cond02}, and a terminal condition $\xi \in L^0(\tilde{\mathcal{F}}_T)$ such that $\xi^- \in L^1(\tilde{\mathcal{F}}_T)$.
    If $\mathcal{A}^\mathfrak{M}(\xi)\neq \emptyset$, then, there exists a there exists a unique $(Y,Z)\in \mathcal{A}(\xi)$ such that $\mathcal{E}^{\mathfrak{M}}(\xi)$ is a modification of $Y$.
\end{theorem}
\begin{remark}
    The subsequent proof together with the methods and results developped respectively in \citep{HKM1011} and \citep{DKGT2014}, show that the statment of the theorem holds true under either one of the following assumption on the generator:
     \begin{itemize}
        \item $g$ fulfills \ref{cond00}, \ref{condlsc}, and \ref{cond01bis}, see \citep{HKM1011};
        \item $g$ fulfills \ref{cond00}, \ref{condlsc}, and \ref{cond04}, see \citep{DKGT2014}.
    \end{itemize}

    As for the assumption $A^\mathfrak{M}(\xi)\neq \emptyset$, it is fulfilled for a wide class of generators and terminal conditions.
    For instance, if $g$ satisfies \ref{cond04}, then any terminal condition $\xi$ bounded from above by a constant $K$ admits $(Y,Z)=(K,0)$ as supersolution which is of any degree of regularity.
    Indeed, since $g(\theta, y,0)=0$, it follows that 
    \begin{equation*}
        Y_\sigma(\theta) -\int_{\sigma}^{\tau}g(\theta,Y_u(\theta),Z_u(\theta))du +\int_\sigma^{\tau}Z_u(\theta)d\tilde{W}_u(\theta)= K-\int_{\sigma}^{\tau}g(\theta,K,0) du=K=Y_\tau(\theta)
    \end{equation*}
    and $Y_T=K\geq \xi$.
\end{remark}
\begin{proof}
    Set $\mathcal{E}^\mathfrak{M}:=\mathcal{E}^\mathfrak{M}(\xi)$.
    By Lemma \ref{lem:rob:Y:satisfies:tarpo}, for any stopping time $\tau\in{\cal T}$ and any supersolution $(Y,Z)\in{\cal A}(\xi)$ holds $Y_\tau(\theta)\ge -E[\xi^-(\theta)\mid{\cal F}_\tau]$, for all $\theta\in\Theta$.
    In particular, $\mathcal{E}^\mathfrak{M}_t\in L^1(\tilde{\cal F}_t)$, for all $t\in [0,T]$.
Further, by means of Proposition \ref{prop:approximation}, there exists a sequence $((Y^n,Z^n))\subseteq \mathcal{A}^\mathfrak{M}$ such that $Y=\inf_n Y^n \in \tilde{\mathcal{S}}$ and $Y$ is a modification of $\mathcal{E}^\mathfrak{M}$.
    \begin{enumerate}[label=\textit{Step \arabic*:},fullwidth,ref=Step \arabic*]
        \item In this step, we construct for each $\theta \in \Theta$ an admissible control process $Z^\theta\in{\cal L}(\theta)$, such that $(Y(\theta),Z^\theta)$ fulfills \eqref{eq:central:ineq:rob}.
            We start by considering the sequence $(\hat{Y}^n(\theta)):=((Y^n)^+(\theta))$ and the limit $\hat{Y}=\inf_n \hat{Y}^n$.
            Lemma \ref{lem:rob:Y:satisfies:tarpo} implies that $(\hat{Y}^n(\theta),Z^n(\theta))$ fulfills \eqref{eq:central:ineq:rob}, for all $n\in\mathbb{N}$.
            In the following, we argue for a fixed $\theta\in\Theta$, and only indicate dependency on $\theta$ if necessary.

            Given the first set of assumptions on the generator we want to apply the method introduced in \citep{DHK1101} to obtain a process $Z^\theta\in{\cal L}(\theta)$ such that $(\hat{Y}^+(\theta),Z^\theta)$ fulfills \eqref{eq:central:ineq:rob}.
            Therefore, we need to construct a sequence $((\tilde Y^n,\tilde Z^n))\subseteq \mathcal{S}\times \mathcal{L}(\theta)$, such that $\tilde Y^n$ is \cadlag\, and $(\tilde Y^n, \tilde Z^n)$ fulfills \eqref{eq:central:ineq:rob}, for all $n\in\mathbb{N}$, $(\tilde Y^n)$ is monotone decreasing, and $\lim_n \tilde Y^n_t=\hat Y_t(\theta)$, for all $t\in\Pi$.
            We proceed as follows and refer to \citep[Lemma 3.1]{DHK1101} for a justification of the involved pastings. 
            Fix $k\in\mathbb{N}$, $\varepsilon > 0$, and let $\Pi^k:=\{iT/2^k:i=0,\cdots,2^k-1\}$.
            Set $(\tilde Y^{1,0},\tilde Z^{1,0}):=(\hat Y^1(\theta),Z^1(\theta))$ and, for $n\in\mathbb{N}$, $n\geq 2$,
            \begin{align*}
                \tilde Y^{n,0}&:=\tilde Y^{n-1,0}1_{[0,\tau_0^n[}+\hat Y^n(\theta)1_{[\tau_0^n,T]},\\
                \tilde Z^{n,0}&:=\tilde Z^{n-1,0}1_{[0,\tau_0^n]}+Z^n(\theta)1_{]\tau_0^n,T]},
            \end{align*}
            where $\tau_0^n:=\inf\{t\geq 0 : \tilde Y^{n-1,0}_t > \hat Y_t^n(\theta)\}$.
            By construction holds $\lim_n \tilde Y^{n,0}_0=\hat Y_0(\theta)$ and we may choose $n_0\in\mathbb{N}$ such that $\tilde Y^{n_0,0}_0-\varepsilon \leq \hat Y_0(\theta)$.
            Set $(\tilde Y^{\varepsilon,0},\tilde Z^{\varepsilon,0}):=(\tilde Y^{n_0,0},\tilde Z^{n_0,0})$.
            Now, let $(\tilde Y^{0,1},\tilde Z^{0,1}):=(\tilde Y^{\varepsilon,0},\tilde Z^{\varepsilon,0})$ and set, for $n\in\mathbb{N}$, $n\geq 1$,
            \begin{align*}
                \tilde Y^{n,1}&:=\tilde Y^{n-1,1}1_{[0,\tau_1^n[}+\hat Y^n(\theta)1_{[\tau_1^n,T]},\\
                \tilde Z^{n,1}&:=\tilde Z^{n-1,1}1_{[0,\tau_1^n]}+ Z^n(\theta)1_{]\tau_1^n,T]},
            \end{align*}
            where $\tau_1^n:=\inf\{t\geq 1T/2^k : \tilde Y^{n-1,1}_t > \hat Y_t^n(\theta)\}$.
            By construction holds $\lim_n \tilde Y^{n,1}_{T/2^k}=\hat Y_{T/2^k}(\theta)$ and using the same arguments as in \citep[Proposition 3.2.2]{DHK1101} we may then construct $(\tilde Y^{\varepsilon,T/2^k},\tilde Z^{\varepsilon,T/2^k})$ such that $Y^{\varepsilon,T/2^k}_{iT/2^k}-\varepsilon \leq \hat Y_{iT/2^k}(\theta)$, for $i=0,1$.
            The continuation of this procedure yields a pair $(\tilde Y^{\varepsilon,\Pi^k},\tilde Z^{\varepsilon,\Pi^k})$ such that $Y^{\varepsilon,\Pi^k}_{t}-\varepsilon \leq \hat Y_{t}(\theta)$, for all $t\in\Pi^k$.
            Let now $((\tilde Y^n,\tilde Z^n):=(\tilde Y^{1/n,\Pi^n},\tilde Z^{1/n,\Pi^n}))$.
            Then, $((\tilde Y^n,\tilde Z^n))$ fulfills all the requirements, except that it needs not be monotone decreasing.
            However, this can be achieved by the same pasting arguments as in the last part of Step 2 in the proof of \citep[Theorem 4.1]{DHK1101}.
            We denote the resulting sequence again with $((\tilde Y^n,\tilde Z^n))$ and observe that the method in \citep[Theorem 4.1]{DHK1101} yields $Z^\theta\in\mathcal{L}(\theta)$ such that $(\tilde Y^+, Z^\theta)$ fulfills \eqref{eq:central:ineq:rob}, where $\tilde Y^+$ is the right hand limit process of the monotone limit $\tilde Y=\lim_n \tilde Y^n$.
            Consequently, since $\tilde Y$ coincides with $\hat Y(\theta)$ on all dyadic rationals, we obtain that $(\hat Y^+(\theta), Z^\theta)$ fulfills \eqref{eq:central:ineq:rob}.

            Now, we show that $\hat{Y}^+_t(\theta)=Y^+_t(\theta)$, for all $t \in [0,T]$ and $\theta \in \Theta$.
            On the one hand, from $Y^n_t(\theta)\geq \hat{Y}_t^n(\theta)$, see Lemma \ref{lem:rob:Y:satisfies:tarpo}, follows $Y_t(\theta)\geq \hat{Y}_t(\theta)$ and $Y_t^+(\theta)\geq \hat{Y}^+_t(\theta)$.
            On the other hand, \eqref{eq:central:ineq:rob} implies, for all $s\geq t$, and $\theta \in\Theta$,
            \begin{equation*}
                \hat{Y}_t^n(\theta)\geq -\int_t^sZ^n_u(\theta)d\tilde{W}_u(\theta)+Y_s^n(\theta).
            \end{equation*}
            By taking conditional expectation we obtain $\hat{Y}^n_t(\theta)\geq E[Y_s^n(\theta)\mid \mathcal{F}_t]$.
            This yields 
            \begin{equation*}
                \hat{Y}_t(\theta)\geq \inf_nE[Y_s^n(\theta)\mid \mathcal{F}_t]\geq E[Y_s(\theta)\mid \mathcal{F}_t].
            \end{equation*}
            Since $Y(\theta)\geq E[\xi(\theta)\mid\mathcal{F}_{\cdot}]$ we may apply Fatou's lemma and obtain, by sending $s$ to $t$, that $\hat{Y}_t(\theta)\geq Y_t^+(\theta)$, which in turn implies $\hat{Y}_t^+(\theta)\geq Y_t^+(\theta)$, for all $t \in [0,T]$.
            Hence $Y_t^+(\theta)=\hat{Y}_t^+(\theta)$, for all $t \in [0,T]$, and we deduce that $(Y^+(\theta),Z^\theta)$ fulfills \eqref{eq:central:ineq:rob}.

            It remains to show that $(Y(\theta),Z^\theta)$ fulfills \eqref{eq:central:ineq:rob}, for all $\theta\in\Theta$.
            To that end note that, since $Y(\theta)$ is a strong supermartingale and $(\mathcal{F}_t)$ fulfills the usual conditions, by \cite[Appendix 1, Remark 5.c, p. 397]{Dellacherie1982} it holds $Y^-_\tau(\theta)\geq Y_\tau(\theta)\geq Y_\tau^+(\theta)$, for all stopping times $\tau \in\mathcal{T}$, and by similar arguments as in Lemma \ref{lem:rob:Y:satisfies:tarpo} we have $Y(\theta)=Y^+(\theta)$, $P\otimes dt$-almost surely.
            Since every $(\mathcal{F}_t)$-stopping time is predictable we may choose, for $0\leq \sigma\leq \tau\leq T$ with $\sigma,\tau \in \mathcal{T}$, an increasing sequence $(\tau_n)$ of stopping times converging to $\tau$, with $\tau_n<\tau$, for all $n\in\mathbb{N}$.
            This yields, for all $\theta \in \Theta$,
            \begin{multline*}
                Y_\sigma(\theta)-\int_\sigma^\tau g_{u}(\theta,Y_u(\theta),Z_u^\theta)du+\int_\sigma^\tau Z_{u}^\theta d\tilde{W}_{u}(\theta)\\
                \geq \lim_{n}Y_\sigma^+(\theta)-\int_\sigma^{\tau_n} g_{u}(\theta, Y_u^+(\theta),Z_u^\theta)du+\int_\sigma^{\tau_n} Z_{u}^\theta d\tilde{W}_{u}(\theta)\\
                \geq \lim_{n}Y_{\tau_n}^+(\theta)=(Y^+_{\tau})^-(\theta)=Y_{\tau}^-(\theta)\geq Y_{\tau}(\theta),
            \end{multline*}
            where the second equality follows from the \ladlag\,property of $Y$. 
            Thus, $(Y(\theta),Z^\theta)$ fulfills \eqref{eq:central:ineq:rob}, for all $\theta\in\Theta$.

        \item In this second and final step, we provide $Z \in \tilde{\mathcal{L}}$ such that $Z(\theta)=Z^\theta$, for all $\theta \in \Theta$.
            The argumentation of  this aggregation result relies on a result in\cite{karandikar01} extended in the present context in \cite{STZ3} and \cite{nutz10}.
            Since $Y^+$ is \cadlag\,and $(Y^+(\theta),Z^\theta)$ fulfills \eqref{eq:central:ineq:rob}, 
            we know that $\langle Y^+(\theta),\tilde{W}(\theta)\rangle=\int_{}^{}Z^\theta \theta du$ and that
            \begin{equation}
                \langle Y^+(\theta),\tilde{W}(\theta)\rangle=Y^+(\theta)\tilde{W}(\theta)-\int_{}^{}Y^-(\theta)d\tilde{W}(\theta)
                -\int_{}^{}\tilde{W}(\theta)dY^+(\theta),\quad \mbox{for all }\theta \in \Theta.
                \label{}
            \end{equation}
            We next argue that the right hand side of the previous expression is $(\tilde{\cal F}_t^+)$-adapted. Indeed, 
            the process $Y^+\tilde{W}$ is $(\tilde{\mathcal{F}}_t^+)$-adapted and 
            since $Y^-$ and $\tilde{W}$ are \caglad, we know by \citep{karandikar01} that there exists an 
            $(\tilde{\mathcal{F}}_t^+)$-adapted process $I$ which coincides with the integral terms $\theta$-wise in the $P$-almost 
            sure sense. We briefly expose how one constructs such a functional for the first integral term.
            For each $n \in \N$, we consider the sequence of $(\tilde{\mathcal{F}}_t^+)$-stopping times $\tilde{\tau}^n_0=0$ and $\tilde{\tau}^n_{k+1}=\inf\{t\geq \tilde{\tau}^n_k:\abs{Y^+_t-Y_{\tilde{\tau}^n_k}^+}\geq 2^{-n}\}$.
            We then define the process $I^n$ through
            \begin{equation}
                I^n_t:=Y^+_{\tilde{\tau}^n_k}+\sum_{i=0}^{k-1}Y_{\tilde{\tau}^n_i}^+\left( \tilde{W}_{\tilde{\tau}^n_{i+1}}-\tilde{W}_{\tilde{\tau}^n_i} \right),\quad \text{ for }\tilde{\tau}_k^n\leq t< \tilde{\tau}^n_{k+1},\text{ and }k\geq 0.
                \label{}
            \end{equation}
            By construction, $I^n$ is an $(\tilde{\mathcal{F}}_t^+)$-adapted process and we define $I=\limsup_n I^n$ which is also 
            $(\tilde{\mathcal{F}}^+_t)$-adapted.
            By use of the Burkholder-Davis-Gundy inequality\footnote{For any $(\mathcal{F}_t)$-adapted process $X^\theta$ holds 
                $E\left[ \sup_{t \in [0,T]}\abs{\int_{0}^{t}X^\theta_u d\tilde{W}_u(\theta)} \right]
                \leq C E\left[\left( \int_{0}^{T}\abs{X_u^\theta}^2\theta du\right)^{1/2} \right]$.} holds
            \begin{equation}
                E\left[ \sup_{t \in [0,T]} \abs{I^n_t(\theta)-\int_{0}^{t}Y_u^-(\theta)d\tilde{W}_u(\theta)}\right]\leq 
                C 2^{-n}E\left[\left(\int_{0}^{T}\theta_u du\right)^{1/2}\right].
                \label{}
            \end{equation}
            Since the right hand side of the previous inequality converges to $0$ for each $\theta \in \Theta$, it follows that
            $I$ is an $(\tilde{\mathcal{F}}_t^+)$-adapted process 
		such that $I(\theta)=\int_{}^{}Y^-(\theta)d\tilde{W}(\theta)$, for all $\theta \in \Theta$.

		Hence, there exists an $(\tilde{\mathcal{F}}_t^+)$-adapted ${\mathbb R}^d$-valued process 
		denoted by $\langle Y^+,\tilde{W}\rangle$, which $\theta$-wise coincides with $\int_{}^{}Z^\theta \theta du$.
		Since $\langle Y^+,\tilde{W}\rangle$ is $\theta$-wise continuous, we deduce that it is $(\tilde{\mathcal{F}}_t^+)$-predictable, which implies, see \cite[IV.61 Remark (c)]{Dellacherie1978}, that it is $(\tilde{\mathcal{F}}_t)$-predictable.
		The same argumentation holds for $\langle \tilde{W},\tilde{W}\rangle$, for which holds $\langle \tilde{W}(\theta),\tilde{W}(\theta)\rangle =\int_{}^{}\theta du$.
		We define $Z$ by the pathwise left derivatives, which by means of Lebegue's derivative theorem exists $dt$-almost surely, as follows
		\begin{equation}
			Z_t:=\left( \lim_{h\searrow 0}\frac{\langle Y^+, \tilde{W} \rangle_{t-h}
					-\langle Y^+, \tilde{W} \rangle_{t}}{h} \right)\left( \lim_{h\searrow 0}\frac{\langle \tilde{W},\tilde{W}\rangle_{t-h} -\langle \tilde{W},\tilde{W}\rangle_t}{h}\right)^{-1},\quad t \in ]0,T],
		\end{equation}
		and so $Z$ is $(\tilde{\mathcal{F}}_t)$-predictable.
		Thus, we obtain some $Z \in \tilde{\mathcal{L}}$ such that $Z(\theta)=Z^\theta$ for all $\theta \in \Theta$. 
	\item From the previous argumentation we know that $(Y^+,Z)$ fulfills \eqref{eq:central:ineq:rob}.
		Hence, uniqueness of $Z$ follows from the Doob-Meyer decomposition under each $\theta\in\Theta$, see \citep[Lemma 3.3]{DHK1101} for details.
\end{enumerate}
\end{proof}

\begin{appendix}
\section{Auxiliary Results}
In the following, we state two technical results that are $\theta$-wise argumentations similar to \citep{DHK1101}.
\begin{lemma}\label{lem:rob:Y:satisfies:tarpo}
    Let $g$ be a generator fulfilling \ref{cond00}, and $\xi\in L^0(\tilde{\mathcal{F}}_{T})$ be a terminal condition such that $\xi^-(\theta) \in L^1(\mathcal{F}_T)$, for all $\theta \in\Theta$.
    Let $(Y,Z) \in \mathcal{A}(\xi)$.
    Then $\xi(\theta) \in L^1(\mathcal{F}_T)$, for all $\theta \in\Theta$, and
    \begin{enumerate}[label=(\roman*)]
        \item\label{lem:rob:Y:satisfies:tarpo:01} the value process $Y$ is a strong supermartingale such that $Y_\sigma(\theta)\geq -E[\xi^-(\theta)\mid \mathcal{F}_\sigma]$, for all $\sigma \in \mathcal{T}$, and all $\theta \in \Theta$.
        \item\label{lem:rob:Y:satisfies:tarpo:04} it holds $Y^-_\sigma(\theta)\geq Y_\sigma(\theta)\geq Y^+_\sigma(\theta)$, for all $\sigma \in \mathcal{T}$, and all $\theta \in\Theta$.
            Moreover, we have $Y(\theta)=Y^+(\theta)$, $P\otimes dt$-almost surely, and $(Y^+(\theta),Z(\theta))$ fulfills \eqref{eq:central:ineq:rob}.
    \end{enumerate}
\end{lemma}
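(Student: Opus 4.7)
The plan is to fix $\theta \in \Theta$ throughout and work under $P$ with the pair $(Y(\theta),Z(\theta))$, so that everything reduces to classical one-measure arguments. Admissibility makes $M_t := \int_0^t Z_u(\theta)\,d\tilde W_u(\theta)$ a $P$-supermartingale with $M_0=0$, and combining this with $g\geq 0$ the supersolution inequality rewrites as the master estimate
\begin{equation*}
    Y_\sigma + M_\sigma - M_\tau \;\geq\; Y_\tau + \int_\sigma^\tau g_u du \;\geq\; Y_\tau, \qquad 0 \le \sigma \le \tau \le T.
\end{equation*}
Almost every claim drops out by conditioning this on $\mathcal{F}_\sigma$ and invoking optional sampling for $M$.

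For (i), I would first take $\sigma=0$ and $\tau=T$: since $Y_0$ is a constant and $M_T\in L^1$, the chain $\xi \leq Y_T \leq Y_0 + M_T$ forces $\xi(\theta)^+\in L^1$, and combined with $\xi^-(\theta)\in L^1$ gives $\xi(\theta)\in L^1$. For general $\sigma$, conditioning the master inequality with $\tau=T$ and using $E[M_T\mid\mathcal{F}_\sigma]\leq M_\sigma$ yields $Y_\sigma \geq E[\xi\mid\mathcal{F}_\sigma] \geq -E[\xi^-\mid\mathcal{F}_\sigma]$. The same argument with an arbitrary $\tau\ge\sigma$ in place of $T$ delivers the strong supermartingale inequality $Y_\sigma\ge E[Y_\tau\mid\mathcal{F}_\sigma]$, once one notes that the bound just proved forces $Y_\tau^-\in L^1$ while $Y_\tau\leq Y_0+M_\tau$ forces $Y_\tau^+\in L^1$.

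For the two pathwise comparisons in (ii), I would exploit that the augmented Brownian filtration is both left- and right-continuous. With stopping times $\tau_n\uparrow\sigma$ strictly on $\{\sigma>0\}$, the strong supermartingale property gives $Y_{\tau_n}\geq E[Y_\sigma\mid\mathcal{F}_{\tau_n}]$; the left-hand side converges pathwise to $Y^-_\sigma$ and Doob's martingale convergence sends the right-hand side to $E[Y_\sigma\mid\mathcal{F}_{\sigma-}]=Y_\sigma$, hence $Y^-_\sigma\geq Y_\sigma$. Symmetrically, with $\tau_n\downarrow\sigma$ strictly, applying conditional Fatou to the \emph{nonnegative} random variables $Y_{\tau_n}+E[\xi^-\mid\mathcal{F}_{\tau_n}]\geq 0$ in $Y_\sigma\ge E[Y_{\tau_n}\mid\mathcal{F}_\sigma]$, together with right-continuity of the filtration, yields $Y_\sigma\geq Y^+_\sigma$.

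The $P\otimes dt$-almost sure identity $Y(\theta)=Y^+(\theta)$ is immediate since a \ladlag~path has only countably many discontinuities, a set of Lebesgue measure zero. To verify that $(Y^+(\theta),Z(\theta))$ still fulfills \eqref{eq:central:ineq:rob}, I would fix $\sigma\leq\tau$ and pass to the limit in the master inequality along strict $\sigma_n\downarrow\sigma$ and $\tau_n\downarrow\tau$, using pathwise right-continuity of both $M$ and $t\mapsto\int_0^t g_u du$, together with the defining convergences $Y_{\sigma_n}\to Y^+_\sigma$, $Y_{\tau_n}\to Y^+_\tau$; at $\tau=T$ nothing new happens since $Y^+_T=Y_T\geq\xi$ by convention. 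The main obstacle is the Fatou step for $Y\geq Y^+$: without the a priori lower bound by $-E[\xi^-\mid\mathcal{F}_\cdot]$ established in (i) there is no control on $Y_{\tau_n}^-$, and no legitimate way to interchange limit and conditional expectation.
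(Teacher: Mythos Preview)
Your proof is correct and, for item (i) and for the passage from $(Y,Z)$ to $(Y^+,Z)$ in (ii), proceeds exactly as the paper does: use positivity of $g$ and the supermartingale property of $\int Z\,d\tilde W$ to sandwich $Y_\tau$ between integrable bounds, then condition.

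The only methodological differences lie in the first two claims of (ii). For the inequalities $Y^-_\sigma\ge Y_\sigma\ge Y^+_\sigma$ the paper simply invokes \cite[Appendix~1, Remark~5.c]{Dellacherie1982} as a known property of optional strong supermartingales, whereas you reprove it by hand via martingale convergence and conditional Fatou; this is fine, and your use of ``left-continuity'' of the Brownian filtration implicitly relies on the fact that every $(\mathcal{F}_t)$-stopping time is predictable, so $\mathcal{F}_{\sigma-}=\mathcal{F}_\sigma$. For the identity $Y(\theta)=Y^+(\theta)$ $P\otimes dt$-a.s., the paper takes a more elaborate route: it shows that $Y(\theta)$ and $Y^+(\theta)$ can differ only on the graphs of the stopping times exhausting the jumps of $Y^+(\theta)$, and then appeals to the optional section theorem to conclude indistinguishability of $Y(\theta)$ from an explicit modification. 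Your argument---\ladlag\ paths have at most countably many discontinuities, hence Lebesgue-null sections---is both shorter and entirely sufficient for the $P\otimes dt$-a.s.\ statement; the paper's detour buys slightly more (a pathwise description of where $Y$ and $Y^+$ differ), but that extra information is not needed for the lemma as stated.
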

\begin{proof}
    As for Item \ref{lem:rob:Y:satisfies:tarpo:01}, from \eqref{eq:central:ineq:rob} and the positivity of the generator follows
    \begin{equation}
        Y_{0}(\theta)+\int_{0}^{\tau}Z_u(\theta)d\tilde{W}_u(\theta)\geq  Y_{\tau}(\theta)\geq- \xi^-(\theta)- \int_{\tau}^T Z_u(\theta)d\tilde{W}_u(\theta),
        \label{eq:lem:001a}
    \end{equation}
    for all $\tau \in \mathcal{T}$ and $\theta \in \Theta$.
    Both sides being integrable by assumption, so is $Y_{\tau}(\theta)\in L^1(\mathcal{F}_\tau)$.
    Since $\xi^-(\theta)\leq \xi(\theta) \leq Y_T(\theta)$, we deduce $\xi(\theta) \in L^1(\mathcal{F}_T)$, for all $\theta \in\Theta$.
    Furthermore, from the admissibility of $Z$ follows $Y_{\tau}(\theta)\geq -E[\xi^-(\theta)\mid \mathcal{F}_{\tau} ]$.
    Similar to \eqref{eq:lem:001a} we deduce that
    \begin{equation}
        Y_{\sigma}(\theta)\geq Y_{\tau}(\theta)-\int_{\sigma}^{\tau}Z_{u}(\theta)d\tilde{W}_u(\theta),
        \label{eq:lem:01}
    \end{equation}
    for all stopping times $0\leq \sigma\leq \tau \leq T$.

    As for Item \ref{lem:rob:Y:satisfies:tarpo:04}, the first statement follows from \cite[Appendix 1, Remark 5.c, p. 397]{Dellacherie1982},  since $Y(\theta)$ is a strong supermartingale and $(\mathcal{F}_t)$ fulfills the usual conditions. 
    To see the second statement, note that by the \ladlag\, property of $Y$ holds $(Y_\sigma^+(\theta))^-=Y_\sigma^{-}(\theta)$, for all $\sigma\in\mathcal{T}$.
    Consequently, for $\sigma\in\mathcal{T}$ such that $(Y_{\sigma}^+(\theta))^-=Y^+_{\sigma}(\theta)$, that is $Y^+(\theta)$ does not jump at $\sigma$, we have $Y_\sigma^-(\theta)=Y_\sigma(\theta)=Y_\sigma^+(\theta)$, that is $Y(\theta)$ does not jump at $\sigma$.
    Denote with $(\tau^n)$ the sequence of stopping times which exausts the jumps of $Y^+(\theta)$, see \citet[Theorem IV.88B]{Dellacherie1978}.
    Then, the process $\bar Y^\theta$ defined by $\bar Y^\theta_t:= Y^+_t(\theta)+\sum_n1_{[\tau^n]}(t)(Y_{\tau^n}(\theta)-Y_{\tau^n}^+(\theta))$, for all $t\in[0,T]$, is an optional modification of $Y(\theta)$.
    Moreover, it holds $\bar Y_\sigma^\theta=Y_\sigma(\theta)$, for all $\sigma\in\mathcal{T}$.
    Hence, by \citep[Theorem IV.86]{Dellacherie1978} $\bar Y^\theta$ is indistinguishable from $Y(\theta)$.
    Since, by definition $\bar Y^\theta=Y^+(\theta)$, $P\otimes dt$-almost surely, we conclude $Y(\theta)=Y^+(\theta)$, $P\otimes dt$-almost surely.
    Finally, for any $\sigma,\tau\in\mathcal{T}$ let $(\sigma_k)$ be a sequence of stopping times decreasing to $\sigma$.
    Then,
    \begin{multline*}
        Y_\sigma^+(\theta)-\int_\sigma^\tau g_{u}(\theta,Y^+_u(\theta),Z_u(\theta))du+\int_\sigma^\tau Z_{u}(\theta) d\tilde{W}_{u}(\theta)\\
        = \lim_{k}Y_{\sigma_k}(\theta)-\int_{\sigma_k}^{\tau} g_{u}(\theta, Y_u(\theta),Z_u(\theta))du+\int_{\sigma_k}^{\tau} Z_{u}(\theta) d\tilde{W}_{u}(\theta)
        \geq \lim_{k}Y_{\tau}(\theta)\geq Y^+_{\tau}.
    \end{multline*}
\end{proof}

Let  $\mathcal{A}^{\mathfrak{m}}$ be the set of supersolutions $(Y,Z)\in \mathcal{A}(\xi)$ which
are $\mathfrak{m}$-regular, that is, $Y$ has a modification $\hat Y$ satisfying $\hat Y_t\in C_{\mathfrak{m}}(\tilde{\mathcal{F}}_t)$ for all $t\in[0,T]$. Define $\mathcal{E}_t^{\mathfrak{m}}=\inf\Set{Y_t: (Y,Z) \in \mathcal{A}^{\mathfrak{m}}}$.

\begin{lemma}\label{lem:rob:Y:satisfies:tarpo06}
    Let $g$ be a generator fulfilling \ref{cond00}, and $\xi\in L^0(\tilde{\mathcal{F}}_{T})$ be a terminal condition such that $\xi^-(\theta) \in L^1$, for all $\theta \in\Theta$.
    Suppose that $\mathcal{A}^{\mathfrak{m}}\neq \emptyset$.
    Then $\mathcal{E}^{\mathfrak{m}}$ is a supermartingale, and the limits
    \begin{equation}\label{def:e-+}
            \mathcal{E}^{\mathfrak{m},-}_t:=\lim_{s\uparrow t,s\in \Pi}\mathcal{E}^{\mathfrak{m}}_s,\quad \text{and}\quad \mathcal{E}_t^{\mathfrak{m,+}}:=\lim_{s\downarrow t,s\in \Pi}\mathcal{E}^{\mathfrak{m}}_s
    \end{equation}
    exist, for all $t\in]0,T[$, quasi-surely. 
    
    Moreover, $\mathcal{E}^{\mathfrak{m},-}$ and $\mathcal{E}^{\mathfrak{m},+}$ are \caglad\, and \cadlag\,supermartingales respectively,\footnote{With the convention that $\mathcal{E}^{\mathfrak{m},-}_0:=\mathcal{E}^{\mathfrak{m}}_0$, and $\mathcal{E}^{\mathfrak{m},+}_T:=\mathcal{E}^{\mathfrak{m}}_T$.} which satisfy
    \begin{equation}\label{E-geE+}
        \mathcal{E}^{\mathfrak{m},-} \geq \mathcal{E}^{\mathfrak{m},+}\quad\mbox{and}\quad
        \mathcal{E}^{\mathfrak{m},-}_t\geq \mathcal{E}^{\mathfrak{m}}_t\geq \mathcal{E}^{\mathfrak{m},+}_t,\quad\mbox{for all }t\in[0,T].
    \end{equation}
\end{lemma}
\begin{proof}
    Note first that $\mathcal{E}^{\mathfrak{m}}$ is adapted by definition.
    Furthermore, given $(Y,Z) \in \mathcal{A}^\mathfrak{m}\neq \emptyset$, Lemma \ref{lem:rob:Y:satisfies:tarpo} implies $\xi(\theta)\in L^1(\mathcal{F}_T)$ and $Y_t(\theta)\geq -E\left[ \xi^-(\theta)\mid\mathcal{F}_t \right]$, for all $\theta\in\Theta$.
    Hence $Y_{t}(\theta)\geq\mathcal{E}^\mathfrak{m}_{t}(\theta)\geq -E[\xi^-(\theta)\mid\mathcal{F}_{t}]$ and $\mathcal{E}^\mathfrak{m}_{t}(\theta)\in L^1(\mathcal{F}_T)$, for all $\theta\in\Theta$.\\
    Fix $\theta \in \Theta$.
    We show that given $t \in [0,T]$ and $\varepsilon>0$ there exists $(Y^\varepsilon,Z^\varepsilon)\in \mathcal{S}\times \mathcal{L}(\theta)$ fulfilling \eqref{eq:central:ineq:rob}, $Y^\varepsilon_t\leq \mathcal{E}^\mathfrak{m}_t(\theta)+\varepsilon$ and $Y^\varepsilon_s\geq \mathcal{E}^\mathfrak{m}_s(\theta)$, for all $s \in [0,T]$.
    By means of Proposition \ref{prop:ex:inf}, there exists a sequence $(Y^n,Z^n)\in \mathcal{A}^\mathfrak{m}$ such that $\mathcal{E}^{\mathfrak{m}}_t(\theta)=(\inf_n Y^n_t)(\theta)$ and $\mathcal{E}^\mathfrak{m}_s\leq Y^n_s$, for all $s \in [0,T]$.
    From this sequence, we define recursively $(\tilde{Y}^n,\tilde{Z}^n)\in \mathcal{S}\times \mathcal{L}(\theta)$ starting with $\tilde{Y}^0=Y^0(\theta)$ and $\tilde{Z}^0=Z^0(\theta)$ and
    \begin{align*}
        \tilde{Y}^n&=Y^0(\theta)1_{[0,t[}+\tilde{Y}^{n-1}1_{\set{\tilde{Y}_t^{n-1}<Y^n_t(\theta)}}1_{[t,T]}+Y^{n}(\theta)1_{\set{\tilde{Y}_t^{n-1}\geq Y^n_t(\theta)}}1_{[t,T]},\\
        \tilde{Z}^n&=Z^0(\theta)1_{[0,t]}+\tilde{Z}^{n-1}1_{\set{\tilde{Y}_t^{n-1}<Y^n_t(\theta)}}1_{]t,T]}+Z^{n}(\theta)1_{\set{\tilde{Y}_t^{n-1}\geq Y^n_t(\theta)}}1_{]t,T]},
    \end{align*}
    for $n\geq 1$.
    It is clear that $(\tilde{Y}^n,\tilde{Z}^n)\subseteq \mathcal{S}\times \mathcal{L}(\theta)$ and fulfills \eqref{eq:central:ineq:rob}.
    By construction, $(\tilde{Y}^n_t)$ is decreasing and such that $\mathcal{E}^{\mathfrak{m}}_t(\theta)=\inf_n \tilde{Y}^n_t$ and $\tilde{Y}^n_s\geq \mathcal{E}^{\mathfrak{m}}_s(\theta)$, for all $s \in [0,T]$.
    Moreover, \citep[Lemma 3.1]{DHK1101} shows that $(Y^\varepsilon,Z^\varepsilon)$ defined as
    \begin{align*}
        Y^\varepsilon&=\tilde{Y}^01_{[0,t[}+\sum_{n}\tilde{Y}^n1_{[t,T]}1_{B^n},\\
        Z^\varepsilon&=\tilde{Z}^01_{[0,t]}+\sum_{n}\tilde{Z}^n1_{]t,T]}1_{B^n},
    \end{align*}
    where $B^0=A^0$, $B^n=A^n\setminus A^{n-1}$, and $A^n=\set{Y^n_t\leq \mathcal{E}^{\mathfrak{m}}_t(\theta)+\varepsilon}$, for $n\in \N$, is such that $(Y^\varepsilon,Z^\varepsilon)\in \mathcal{S}\times \mathcal{L}(\theta)$, fulfills \eqref{eq:central:ineq:rob} and by construction fulfills $Y^\varepsilon_t\leq \mathcal{E}^{\mathfrak{m}}_t(\theta)+\varepsilon$. 

    For $\varepsilon>0$, and any $0\leq s<t\leq T$ we pick $(Y^\varepsilon,Z^\varepsilon)\in \mathcal{S}\times \mathcal{L}(\theta)$ fulfilling \eqref{eq:central:ineq:rob} such that $Y^\varepsilon_s\leq \mathcal{E}^\mathfrak{m}_s(\theta)+\varepsilon$ and $Y^\varepsilon_t\geq \mathcal{E}^\mathfrak{m}_t(\theta)$, for all $t \in [0,T]$.
    Hence
    \begin{equation}\label{eq:tresimportant}
        \mathcal{E}^\mathfrak{m}_t(\theta)\leq Y^\varepsilon_t\leq Y^\varepsilon_s-\int_{s}^{t}g_u(Y_u^\varepsilon,Z^\varepsilon_u)du+\int_{s}^{t}Z^\varepsilon_u d\tilde{W}_u(\theta)      \leq \mathcal{E}^\mathfrak{m}_s(\theta)+\int_{s}^{t}Z^\varepsilon_u d\tilde{W}_u(\theta)+\varepsilon.
    \end{equation}
    Taking conditional expectation on both sides under $\mathcal{F}_s$ followed by sending $\varepsilon$ to zero shows the supermartingale property for $\mathcal{E}^{\mathfrak{m}}(\theta)$.
    Hence, $\mathcal{E}^{\mathfrak{m}}$ is a supermartingale and the definition of $\tilde P$ immediately yields that $\tilde P[A]=0$, where $A\in\mathcal{\tilde F}_T$ is the set where the limits in \eqref{def:e-+} do not exist.
\end{proof}

\end{appendix}

\bibliographystyle{abbrvnat}
\bibliography{bibliography}
\end{document}